\title{There are infinitely many monotone games over $L_5$}
\author{Eric Demer, UCLA\\
  Peter Selinger, Dalhousie University}
\date{}
\begin{document}

\maketitle 

\begin{abstract}
  A notion of combinatorial game over a partially ordered set of
  atomic outcomes was recently introduced by Selinger. These games are
  appropriate for describing the value of positions in Hex and other
  monotone set coloring games. It is already known that there are
  infinitely many distinct monotone game values when the poset of
  atoms is not linearly ordered, and that there are only finitely many
  such values when the poset of atoms is linearly ordered with 4 or
  fewer elements. In this short paper, we settle the remaining case:
  when the atom poset has 5 or more elements, there are infinitely
  many distinct monotone values.
\end{abstract}

\section{Introduction}

Combinatorial game theory, introduced by Conway {\cite{ONAG}} and
Berlekamp, Conway, and Guy {\cite{WinningWays}} in the 1970s and
1980s, is a mathematical theory of sequential perfect information
games. In its original form, this theory deals with games following
the {\em normal play} convention, under which the last player who is
able to make a move wins the game. However, combinatorial game theory
has also been applied to many other situations, including {\em
  mis\`ere play}, in which the last player to move loses, as well as
{\em scoring games}, in which the final outcome is a numerical score.

In {\cite{Selinger2021}}, a new variant of scoring games was
introduced in which the atomic positions are elements of a partially
ordered set (poset). These games are appropriate for analyzing
monotone set coloring games such as Hex, and in fact, they capture
that class of games exactly {\cite{S2021-hex-cgt}}. In
{\cite{Selinger2021}}, it was shown that when the atom poset $A$ is
not linearly ordered, i.e., when it has a pair of incomparable
elements, then there are infinitely many non-equivalent monotone game
values over $A$, and when $A$ is linearly ordered with 4 or fewer
elements, there are only finitely many such values up to equivalence.
It was stated in {\cite{Selinger2021}} without proof that when $A$ is
the 6-element linear order, there are infinitely many values, and it
was conjectured that this is also true when $A$ is the 5-element
linear order.

The purpose of this short paper is to supply a positive answer to this
conjecture.

\section{Background}

We briefly recall the definition of games over a poset and some of
their properties. Full details can be found in {\cite{Selinger2021}}.
Let $A$ be a partially ordered set whose elements we call {\em atoms}.
The class of combinatorial games over $A$ is inductively defined as
follows:
\begin{itemize}
\item For every atom $a\in A$, $[a]$ is a game, and
\item Whenever $L$ and $R$ are non-empty sets of games, then $\g{L|R}$
  is a game.
\end{itemize}
The fact that it is an inductive definition implies that there are no
other games except the ones constructed above. A game of the form
$[a]$ is called {\em atomic}, and we often write $a$ instead of $[a]$
when no confusion arises. A game of the form $\g{L|R}$ is called {\em
  composite}. In the game $G=\g{L|R}$, the elements of $L$ and $R$ are
called the {\em left options} and {\em right options} of $G$,
respectively. The idea is that there are two players, called Left and
Right, and in the game $G=\g{L|R}$, $L$ represents the set of all
moves available to Left, and $R$ represents the set of all moves
available to Right. We use the usual notations of combinatorial game
theory. Specifically, if $L=\g{G_1,\ldots,G_n}$ and
$R=\g{H_1,\ldots,H_m}$, we write $\g{G_1,\ldots,G_n|H_1,\ldots,H_m}$
for $\g{L|R}$. We also write $G^L$ and $G^R$ for a typical left and
right option of $G$. A {\em position} of a game $G$ is either $G$
itself, or an option of $G$, or an option of an option, and so on
recursively.

On the class of games over a poset $A$, we define the relations $\leq$
and $\tri$ by mutual recursion as follows:
\begin{itemize}
\item $G\leq H$ if all three of the following conditions hold:
  \begin{enumerate}
  \item All left options $G^L$ satisfy $G^L\tri H$, and
  \item all right options $H^R$ satisfy $G\tri H^R$, and
  \item if $G$ or $H$ is atomic, then $G\tri H$.
  \end{enumerate}
\item $G\tri H$ if at least one of the following
  conditions holds:
  \begin{enumerate}
  \item There exists a right option $G^R$ such that $G^R\leq H$, or
  \item there exists a left option $H^L$ such that $G\leq H^L$, or
  \item $G=[a]$ and $H=[b]$ are atomic and $a\leq b$.
  \end{enumerate}
\end{itemize}
Intuitively, $G\leq H$ means that the game $H$ is at least as good for
Left as the game $G$. The following transitivity properties hold for
games $G,H,K$ over $A$: If $G\leq H\leq K$ then $G\leq K$; if $G\tri
H\leq K$ then $G\tri K$; and if $G\leq H\tri K$ then $G\tri K$.  When
$G\leq H$ and $H\leq G$, we say that $G$ and $H$ are {\em equivalent}.
The {\em value} of a game is its equivalence class; in particular, we
say that $G$ and $H$ have the same value if they are equivalent.

A game $G$ is called {\em locally monotone} if all its left options
satisfy $G\leq G^L$ and all its right options satisfy $G^R\leq G$, and
{\em monotone} if all positions occurring in $G$ are locally monotone.
For $n\geq 0$, let $L_n$ denote the linearly ordered set with $n$
elements.  It was shown in {\cite{Selinger2021}} that for $n\leq 4$,
there exist only finitely many monotone games over $L_n$ up to
equivalence. It seems natural to conjecture that this remains true for
$n\geq 5$, but we will show below that this is not the case: when
$n\geq 5$, there exist infinitely many non-equivalent monotone games
over $L_n$.

\section{An infinite sequence of games over $L_5$}

Let $L_5 = \s{-3,-2,-1,0,1}$ be the $5$-element linearly ordered set,
with its natural order $-3<-2<-1<0<1$. We define the following games
and operations on games over $L_5$:
\[
\begin{array}{lcl}
  \star &=& \g{-1 | -3}, \\
  \M(G) &=& \g{1 | G}, \\
  \P(G) &=& \g{G | -2}, \\
  \PS(G) &=& \g{G | \star}. \\
\end{array}
\]
Moreover, for $n\in\N$, we write
\[ \PN{n}(G) = \begin{cases}
  \P(G) & \mbox{when $n$ is odd,} \\
  \PS(G) & \mbox{when $n$ is even.}
\end{cases}
\]
Then we define the following sequence of games:
\[
\begin{array}{lcl}
  G_{0} &=& 0, \\
  G_{n+1} &=& \M(\PN{n}(G_{n})).
\end{array}
\]
For example:
\[
\begin{array}{rcl}
  G_{0} &=& 0, \\
  G_{1} &=& \M(\PS(0)), \\
  G_{2} &=& \M(\P(\M(\PS(0)))), \\
  G_{3} &=& \M(\PS(\M(\P(\M(\PS(0)))))). \\
\end{array}
\]

\begin{lemma}\label{lem:monotone}
  For all $n$, $G_{n}$ is monotone.
\end{lemma}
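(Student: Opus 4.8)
The plan is to prove monotonicity by induction on $n$, reducing the global statement ``every position of $G_n$ is locally monotone'' to a small number of explicit inequalities. The positions of $G_{n+1} = \M(\PN{n}(G_n)) = \{1 \mid \PN{n}(G_n)\}$ are exactly the positions of $G_n$ together with the two new games $G_{n+1}$ and $\PN{n}(G_n)$, plus the atoms and the fixed game $\star$, whose local monotonicity I would check once and for all. Thus, granting by the inductive hypothesis that $G_n$ is monotone, it suffices to verify that $G_{n+1}$ and $\PN{n}(G_n)$ are locally monotone. Unfolding the definitions, local monotonicity of $G_{n+1}$ amounts to $G_{n+1} \leq 1$ and $\PN{n}(G_n) \leq G_{n+1}$, while local monotonicity of $\PN{n}(G_n) = \{G_n \mid Y\}$, with $Y = -2$ or $Y = \star$ according to parity, amounts to $\PN{n}(G_n) \leq G_n$ and $Y \leq \PN{n}(G_n)$.

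Before the induction I would record two facts that need no monotonicity input. First, a direct structural induction on games shows $G \leq 1$ for every game $G$ over $L_5$, since $1$ is the top atom; one proves simultaneously $G \tri 1$, using a right option of $G$ or the atomic case, and $G \leq 1$. This immediately supplies the left condition $G_{n+1} \leq 1$. Second, I would check by hand that $\star$ is locally monotone, i.e.\ $\star \leq -1$ and $-3 \leq \star$, and record the handful of atomic comparisons in $L_5$ that will be reused (such as $-3 \leq -2$ and $\star \leq 0$).

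The heart of the argument is an induction establishing two invariants: (F2) $-1 \leq G_n$, and (F1) $\PN{n}(G_n) \leq G_n$. The bound (F2) is an easy induction on its own: unfolding $-1 \leq G_n = \{1 \mid \PN{n-1}(G_{n-1})\}$ reduces, through the left option $1$ and the left option $G_{n-1}$ of $\PN{n-1}(G_{n-1})$, to $-1 \leq G_{n-1}$. Once (F2) is known, the two ``outer'' conditions fall out: $Y \leq \PN{n}(G_n)$ reduces to $-2 \leq G_n$, respectively to $-1 \leq G_n$ together with the already-checked $-3 \leq \star$, all consequences of (F2) and transitivity; and the right condition $\PN{n}(G_n) \leq G_{n+1}$ reduces, via $G_n \leq 1$ and the right option $\PN{n}(G_n)$, to $\PN{n}(G_n) \tri \PN{n}(G_n)$, which follows from (F1) through clause 2 of the definition of $\tri$, taking the left option $G_n$ of $\PN{n}(G_n)$.

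I expect (F1) to be the main obstacle, since it is where the recursion genuinely bites and where the alternation between $\P$ and $\PS$ matters. To prove $\PN{n}(G_n) = \{G_n \mid Y\} \leq G_n$ for $n \geq 1$, writing $G_n = \{1 \mid X'\}$ with $X' = \PN{n-1}(G_{n-1})$, I must verify $G_n \tri G_n$ and $\{G_n \mid Y\} \tri X'$ for the unique right option $X'$ of $G_n$. The first unwinds, through clause 1 of $\tri$, to $X' \leq G_n$, i.e.\ the previous level's right condition for $\M$, which itself rests on (F1) at $n-1$. The second splits into the parity cases $Y = -2$ with $X' = \PS(G_{n-1})$, and $Y = \star$ with $X' = \P(G_{n-1})$; each reduces through clause 1 of $\tri$ to a comparison $Y \leq X'$ that I settle using (F2) at $n-1$ together with the recorded atomic and $\star$ comparisons. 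The base case $n = 0$ is the direct verification $\PS(0) = \{0 \mid \star\} \leq 0$. Carefully threading these dependencies — (F1) at $n$ from (F1) and (F2) at $n-1$, with the $\M$-step reintroduced — is the only delicate point; everything else is routine unfolding of $\leq$ and $\tri$.
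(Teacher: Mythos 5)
Your proof is correct, but it follows a genuinely different route from the paper's. You proceed by induction on $n$, reducing monotonicity to explicit inequalities for the two new positions $G_{n+1}$ and $\PN{n}(G_n)$ at each stage, carrying two invariants through the induction, (F1) $\PN{n}(G_n)\leq G_n$ and (F2) $-1\leq G_n$, together with the global fact $G\leq 1$ for every game over $L_5$ and a few fixed comparisons involving atoms and $\star$. I checked each of your reductions against the defining clauses of $\leq$ and $\tri$ and they all go through, and the dependency structure is well-founded: (F2) at $n$ needs only (F2) at $n-1$, and (F1) at $n$ needs only (F1) and (F2) at $n-1$. The paper argues quite differently: it observes that every $G_n$ has the property that the final score equals the number of Left moves minus the number of Right moves, defines $\m(G)=C$ for the broader class of games whose final score is Left moves minus Right moves plus a constant $C$, and proves by a single induction over that class that $\m(G)\leq\m(H)$ implies $G\tri H$ and $\m(G)<\m(H)$ implies $G\leq H$; monotonicity is then immediate because $\m(G^L)=\m(G)+1$ and $\m(G^R)=\m(G)-1$. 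The paper's argument is shorter and more general --- it shows that all ``constant temperature $1$'' games are automatically monotone, a fact that also feeds into the paper's closing remark relating the $G_n$ to normal-play games --- but it requires introducing the auxiliary mean-value notion. Yours is more elementary and self-contained, using nothing beyond the recursive definitions of $\leq$ and $\tri$, at the cost of more bookkeeping: two interleaved invariants, a parity case split, and the extra lemma $G\leq 1$. One small simplification available to you: $G_n\tri G_n$ follows directly from clause 2 of $\tri$ via the left option $1$ and your global fact $G_n\leq 1$, so the first half of your (F1) step need not route through $X'\leq G_n$ and hence need not invoke (F1) at level $n-1$ at that point.
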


\begin{proof}
  It is easy to see from their definition that the games in the
  sequence $G_0, G_1, G_2, \ldots$ all have the property that the
  final score is equal to the number of moves made by Left minus the
  number of moves made by Right. Such games are automatically
  monotone. To see why, consider the slightly more general class of
  games $G$ with the property that the final score is equal to the
  number of moves made by Left minus the number of moves made by Right
  plus some constant $C\in\Z$. We write $\m(G)=C$ (the {\em mean
    value} of $G$, see {\cite{ONAG}}). It is then easy to prove by
  induction that for all such games, $\m(G)\leq\m(H)$ implies
  $G\tri H$ and $\m(G)<\m(H)$ implies $G\leq H$. In particular,
  since $\m(G^L) = \m(G)+1$, we have $G\leq G^L$, and similarly
  $G^R\leq G$, proving that $G$ is monotone.
\end{proof}

\begin{lemma}\label{lem:L5-leq}
  For all $n$, $G_{n}\leq G_{n+1}$.
\end{lemma}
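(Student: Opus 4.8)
The plan is to verify the three defining clauses of $\leq$ directly from the recursive definition, reducing every obligation to reflexivity of $\leq$. So the first step is to record that $G\leq G$ holds for every game $G$ over $L_5$; this is standard (see \cite{Selinger2021}) and in any case follows by a routine structural induction, since clause (1) of $G\leq G$ is discharged by $G^L\leq G^L$, clause (2) by $G^R\leq G^R$, and clause (3) in the atomic case by $a\leq a$, each applied on a strictly smaller position.

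The key observation is that the games in the sequence have a very rigid option structure. For $n\geq 1$ we have $G_n=\M(\PN{n-1}(G_{n-1}))=\g{1|\PN{n-1}(G_{n-1})}$, so the unique left option of $G_n$ is the atom $[1]$, while the unique right option of $G_{n+1}=\g{1|\PN{n}(G_n)}$ is $\PN{n}(G_n)$. Crucially, $\PN{n}(G_n)$ always has $G_n$ as its unique left option, regardless of whether it equals $\P(G_n)=\g{G_n|-2}$ or $\PS(G_n)=\g{G_n|\star}$. These two facts are exactly what drive the inequality.

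For $n\geq 1$ I would then check the clauses of $G_n\leq G_{n+1}$ in turn. Clause (1): the only left option $[1]$ of $G_n$ satisfies $[1]\tri G_{n+1}$, since $G_{n+1}$ has left option $[1]$ and $[1]\leq[1]$ (clause (2) of $\tri$). Clause (2): the only right option $\PN{n}(G_n)$ of $G_{n+1}$ satisfies $G_n\tri\PN{n}(G_n)$, since $\PN{n}(G_n)$ has left option $G_n$ and $G_n\leq G_n$ (again clause (2) of $\tri$). Clause (3) is vacuous, as neither game is atomic. It is worth noting that this argument never invokes an ``inductive hypothesis'' $G_{n-1}\leq G_n$: each inequality is a direct consequence of reflexivity together with the option structure, so the family of statements is really proved termwise.

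It remains to treat the base case $n=0$ separately, because $G_0=[0]$ is atomic. Here clause (1) holds vacuously; clause (2) requires $[0]\tri\PS(0)=\g{0|\star}$, which holds because $\g{0|\star}$ has left option $[0]$ and $[0]\leq[0]$; and clause (3) requires $[0]\tri G_1$, which holds because $G_1$ has left option $[1]$ and $[0]\leq[1]$, using $0\leq 1$ in $L_5$. The only point demanding care is the bookkeeping of which clause of $\tri$ to cite; there is no substantial obstacle, since the rigid option structure forces every required $\tri$-relation to arise from a left option of the larger game via reflexivity.
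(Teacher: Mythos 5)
Your proposal is correct and follows essentially the same route as the paper: verify the three clauses of $\leq$ directly, using the fact that $1$ is a left option of $G_{n+1}$, that $G_n$ is a left option of $\PN{n}(G_n)$, and that $0\leq 1$ handles the atomic case $n=0$; like the paper's proof, yours is not by induction on $n$. The only difference is that you make the needed reflexivity $G\leq G$ explicit (the paper uses it tacitly, as it is established in \cite{Selinger2021}), which is a reasonable bit of added care rather than a deviation.
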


\begin{proof}
  To show $G_{n}\leq G_{n+1}$, we must show three things: First, we
  must show that all left options $G^{L}_{n}$ satisfy
  $G^{L}_{n}\tri G_{n+1}$. When $n=0$, there is no such left option,
  and when $n>0$, the unique left option of $G_{n}$ is $1$. But $1$ is
  also a left option of $G_{n+1}$, so $1\tri G_{n+1}$ as
  claimed. Second, we must show that all right options $G^{R}_{n+1}$
  satisfy $G_{n}\tri G^{R}_{n+1}$. But the unique right option of
  $G_{n+1}$ is $\PN{n}(G_{n})$, and $G_{n}\tri \PN{n}(G_{n})$ holds
  because $G_{n}$ is a left option of $\PN{n}(G_{n})$. Third, we must
  show that if $G_{n}$ or $G_{n+1}$ is atomic, then
  $G_{n}\tri G_{n+1}$. But this only happens when $n=0$. In this case,
  we must show $0\tri G_{1}$. But this holds because $1$ is a left
  option of $G_{1}$ and $0\leq 1$.
\end{proof}

Remark: the proof is not by induction.

\begin{corollary}\label{cor:L5-leq}
  For all $n$, $0\leq G_{n}$.
\end{corollary}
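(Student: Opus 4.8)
The plan is to obtain this immediately from Lemma~\ref{lem:L5-leq} by a short induction on $n$, relying on the transitivity of $\leq$ recorded earlier in the excerpt. The entire substantive content is already packaged in the inequality $G_n\leq G_{n+1}$; the corollary is just the observation that the sequence begins at $G_0=0$, so $0$ sits at the bottom of the chain.

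For the base case $n=0$, I would note that $G_0=0$, so the claim reduces to $0\leq 0$. Since $[0]$ is atomic and has no options, the first two clauses in the definition of $\leq$ are vacuously satisfied, and the third clause requires only $[0]\tri[0]$; this holds by the atomic clause for $\tri$, since $0\leq 0$ in $L_5$. Hence $0\leq G_0$. For the inductive step, I would assume $0\leq G_n$, invoke Lemma~\ref{lem:L5-leq} to get $G_n\leq G_{n+1}$, and then apply the transitivity property ``if $G\leq H\leq K$ then $G\leq K$'' to conclude $0\leq G_{n+1}$.

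Equivalently, and perhaps more transparently, one can simply chain the inequalities of Lemma~\ref{lem:L5-leq} into $0=G_0\leq G_1\leq\cdots\leq G_n$ and collapse the chain by repeated transitivity. I expect no real obstacle here: the only point requiring the slightest care is verifying reflexivity $0\leq 0$ at the base, which follows directly from the definition of $\leq$ on atomic games, and everything else is a mechanical application of transitivity.
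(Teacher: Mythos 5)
Your proof is correct and takes essentially the same route as the paper, which likewise derives the corollary by chaining $0 = G_0 \leq G_1 \leq \cdots \leq G_n$ from Lemma~\ref{lem:L5-leq} and collapsing the chain by transitivity. Your explicit verification of reflexivity $0\leq 0$ at the base case (via the atomic clauses of the definitions of $\leq$ and $\tri$) is a small point the paper leaves implicit, but it changes nothing substantive.
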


\begin{proof}
  From Lemma~\ref{lem:L5-leq} and transitivity, since $0 = G_{0} \leq
  G_{1} \leq \ldots \leq G_{n}$.
\end{proof}

\begin{lemma}\label{lem:L5-tri}
  For all $n$, $G_{n+1}\nleq G_{n}$.
\end{lemma}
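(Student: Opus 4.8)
The plan is to prove the stronger statement that $G_m\nleq G_k$ whenever $m>k$, the stated lemma being the instance $m=n+1$, $k=n$. Note first that the mean-value argument of Lemma~\ref{lem:monotone} cannot settle this directly: all of the $G_m$ have mean $0$, so it yields only $G_{n+1}\tri G_n$ and not the strict separation $G_{n+1}\nleq G_n$. The real content therefore lies in unfolding the mutually recursive definitions of $\leq$ and $\tri$.

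Write $Q_m=\PN{m}(G_m)$ for the unique right option of $G_{m+1}=\{1\mid Q_m\}$; its own unique right option is $-2$ when $m$ is odd and $\star$ when $m$ is even. The first step is a reduction of comparisons among the $G$'s and $Q$'s. For $m>k\geq 1$ the atom $1$ is a left option of both $G_m$ and $G_k$, so the first clause of $G_m\leq G_k$ (namely $1\tri G_k$) holds automatically and the atomic clause is vacuous; hence $G_m\leq G_k$ is equivalent to the second clause, i.e.\ to $G_m\tri Q_{k-1}$. Unfolding $\tri$ gives $G_m\tri Q_{k-1}$ iff $Q_{m-1}\leq Q_{k-1}$ or $G_m\leq G_{k-1}$, and iterating downward---using that $G_m\leq G_0=0$ fails because $1\ntri 0$---I would obtain that $G_m\leq G_k$ holds iff $Q_{m-1}\leq Q_i$ for some $i<k$. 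An identical unfolding shows that $Q_m\leq Q_j$ holds iff both $G_m\tri Q_j$ and $Q_m\tri r_j$, where $r_j$ denotes the right option of $Q_j$.

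The heart of the argument is a parity obstruction: $Q_p\tri r_j$ holds if and only if $p\equiv j\pmod 2$. This splits into the two claims $Q_p\tri -2\iff p$ odd and $Q_p\tri\star\iff p$ even, which I would verify by comparing the right option of $Q_p$ (itself $-2$ or $\star$) against $-2$ and against $\star$ and checking the atomic subcases. The one ingredient that requires its own induction is the fact that $Q_p\nleq -1$ for every $p$; I would establish this simultaneously with $G_p\ntri -1$, the two feeding into each other and bottoming out at $0\ntri -1$. I expect this parity lemma, and in particular the helper $Q_p\nleq -1$, to be the main obstacle.

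Granting the parity lemma, the two reductions combine cleanly. When $m\not\equiv j\pmod 2$ the condition $Q_m\tri r_j$ fails, so $Q_m\nleq Q_j$ outright; when $m\equiv j$ the alternative $Q_{m-1}\leq Q_j$ again dies by parity, so $Q_m\leq Q_j$ collapses to $G_m\leq G_j$. I would then finish by a single strong induction on $m$ establishing jointly \textbf{(I)} $G_m\nleq G_k$ for all $k<m$ and \textbf{(II)} $Q_m\nleq Q_j$ for all $j<m$. Here (I) at level $m$ invokes only (II) at level $m-1$ through the first reduction, while (II) at level $m$ is immediate by parity when $m\not\equiv j$ and otherwise reduces to (I) at the same level $m$ with index $j<m$; since (I) at level $m$ never refers back to (II) at level $m$, proving (I) before (II) at each level keeps the recursion well-founded. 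The stated lemma $G_{n+1}\nleq G_n$ is then the case $m=n+1$, $k=n$ of (I).
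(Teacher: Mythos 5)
Your proposal is correct: the two reductions, the parity lemma in both directions, the helper chain $0\ntri -1 \Rightarrow Q_0\nleq -1 \Rightarrow G_1\ntri -1 \Rightarrow \cdots$, and the interleaved induction (I)/(II) all check out, and the recursion is well-founded exactly as you describe. The core obstructions are the same ones the paper uses — your parity lemma $Q_p\tri r_j \iff p\equiv j \pmod 2$ is the two-sided version of the paper's claims (3) and (4) (that $\PN{n}(G_n)\ntri -2$ for $n$ even and $\PN{n}(G_n)\ntri \star$ for $n$ odd), and your helper $Q_p\nleq -1$ via $G_p\ntri -1$ is the paper's claims (2) and (1) — but your inductive architecture is genuinely different. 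The paper proves only the consecutive non-comparisons $\PN{n}(G_n)\nleq\PN{n-1}(G_{n-1})$ and $G_{n+1}\nleq G_n$; the one non-consecutive comparison this forces, $G_{n+1}\nleq G_{n-1}$, is never unfolded at all but is deduced from the induction hypothesis $G_n\nleq G_{n-1}$ together with $G_n\leq G_{n+1}$ (Lemma~\ref{lem:L5-leq}) and transitivity, and likewise $G_n\ntri -1$ is obtained by citing Corollary~\ref{cor:L5-leq} rather than by a separate induction. You instead strengthen the induction hypothesis to all pairs $m>k$, and your ``first reduction'' ($G_m\leq G_k$ iff $Q_{m-1}\leq Q_i$ for some $i<k$) absorbs the non-consecutive cases directly. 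The trade-off: your argument is self-contained — it nowhere invokes Lemma~\ref{lem:L5-leq}, Corollary~\ref{cor:L5-leq}, or the transitivity properties of $\leq$ and $\tri$, which are nontrivial imported facts from {\cite{Selinger2021}} — and it yields biconditional characterizations of the order among the $G$'s and $Q$'s; the paper's version buys brevity, since leaning on the monotonicity of the sequence and transitivity means the induction never has to compare games more than one step apart. (Note also that the final results are equally strong: your $G_m\nleq G_k$ for all $m>k$ follows from the paper's lemma plus Lemma~\ref{lem:L5-leq} and transitivity; the real gain of the stronger statement is as an induction hypothesis.) One last simplification available to you: in step (II) with $m\equiv j$, you only need the implication $Q_m\leq Q_j\Rightarrow G_m\leq G_j$, which uses clause 1 of the definition of $\leq$ and the parity failure $Q_{m-1}\ntri r_j$; the positive half of your parity lemma is never actually used.
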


\begin{proof}
  The following claims hold for all $n$. We prove them by simultaneous
  complete induction on $n$. The lemma is claim (8).
  \begin{enumerate}
  \item[(1)] $G_{n}\ntri -1$.

    This follows from Corollary~\ref{cor:L5-leq}. Indeed, if
    $G_{n}\tri -1$ were true, then transitivity would imply
    $0\tri -1$, which is absurd.
    
  \item[(2)] $\PN{n}(G_{n})\nleq -1$.

    This follows from (1), because $G_{n}$ is a left option of
    $\PN{n}(G_{n})$.
    
  \item[(3)] If $n$ is even, $\PN{n}(G_{n})\ntri -2$.

    Since $-2$ is atomic and $\PN{n}(G_{n})$ is not, the only way
    $\PN{n}(G_{n})\tri -2$ can hold is if some right option $H$ of
    $\PN{n}(G_{n})$ satisfies $H\leq -2$. But this is impossible
    because $\star$ is the unique right option of $\PN{n}(G_{n})$ and
    $\star\nleq -2$.
    
  \item[(4)] If $n$ is odd, $\PN{n}(G_{n})\ntri\star$.

    Since neither $\PN{n}(G_{n})$ nor $\star$ is atomic, there are
    only two ways in which $\PN{n}(G_{n})\tri\star$ could hold. Either
    some right option $H$ of $\PN{n}(G_{n})$ satisfies $H\leq\star$;
    but this is impossible because $-2$ is the unique right option of
    $\PN{n}(G_{n})$ and $-2\nleq\star$. Or else some left option $K$ of
    $\star$ satisfies $\PN{n}(G_{n})\leq K$; but this is impossible by
    (2) because $-1$ is the unique left option of $\star$.
    
  \item[(5)] If $n>0$, then $\PN{n}(G_{n})\nleq\PN{n-1}(G_{n-1})$.

    When $n$ is even, this follows from (3) because $-2$ is a right
    option of $\PN{n-1}(G_{n-1})$.

    When $n$ is odd, this follows from (4) because $\star$ is a right
    option of $\PN{n-1}(G_{n-1})$.
  
  \item[(6)] If $n>0$, then $G_{n+1}\nleq G_{n-1}$.

    For the sake of obtaining a contradiction, suppose $G_{n+1}\leq
    G_{n-1}$.  By Lemma~\ref{lem:L5-leq}, we have $G_{n}\leq
    G_{n+1}$. With transitivity, this implies $G_{n}\leq
    G_{n-1}$. However, this contradicts (8) of the induction
    hypothesis.
    
  \item[(7)] If $n>0$, then $G_{n+1}\ntri\PN{n-1}(G_{n-1})$.

    Because neither $G_{n+1}$ nor $\PN{n-1}(G_{n-1})$ is atomic, there
    are only two ways in which $G_{n+1}\tri\PN{n-1}(G_{n-1})$ could
    hold. Either some right option $H$ of $G_{n+1}$ satisfies
    $H\leq\PN{n-1}(G_{n-1})$; but this is impossible by (5) because
    $\PN{n}(G_{n})$ is the unique right option of $G_{n+1}$.  Or else
    some left option $K$ of $\PN{n-1}(G_{n-1})$ satisfies $G_{n+1}\leq
    K$; but this is impossible by (6) because $G_{n-1}$ is the unique
    left option of $\PN{n-1}(G_{n-1})$.
  
  \item[(8)] $G_{n+1}\nleq G_{n}$.
    
    When $n=0$, this holds by direct calculation: $G_{1}\nleq 0$
    because $1$ is a left option of $G_{1}$ and $1\ntri 0$.
    
    When $n>0$, this follows from (7) because $\PN{n-1}(G_{n-1})$ is a
    right option of $G_{n}$.\qedhere
  \end{enumerate}
\end{proof}

\begin{corollary}\label{cor:infinite}
  There are infinitely many non-equivalent monotone games over the
  $5$-element linearly ordered atom poset $L_5$.
\end{corollary}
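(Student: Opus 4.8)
The plan is to show that the games $G_0, G_1, G_2, \ldots$ fall into infinitely many distinct equivalence classes; since every $G_n$ is monotone by Lemma~\ref{lem:monotone}, this immediately gives the corollary. The key reduction is that it suffices to prove the sequence is \emph{strictly} increasing under $\leq$, that is, $G_m \leq G_n$ but $G_n \nleq G_m$ whenever $m < n$. Once strictness is established, no two of the games can be equivalent, so they witness infinitely many distinct monotone values.

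The non-strict inequality $G_m \leq G_n$ for $m \leq n$ is already in hand: chaining Lemma~\ref{lem:L5-leq} along $G_m \leq G_{m+1} \leq \cdots \leq G_n$ and applying transitivity of $\leq$ yields it directly. This is precisely the reasoning used to prove Corollary~\ref{cor:L5-leq}, specialized to start at index $m$ rather than at $0$.

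For the strict part I would argue by contradiction. Suppose $G_n \leq G_m$ for some $m < n$. Because $m < n$ we have $m+1 \leq n$, so the non-strict inequality just established gives $G_{m+1} \leq G_n$. Combining this with the assumed $G_n \leq G_m$ through transitivity produces $G_{m+1} \leq G_m$, which directly contradicts Lemma~\ref{lem:L5-tri}. Hence $G_n \nleq G_m$ for all $m < n$, completing the proof of strict monotonicity of the sequence.

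Finally I would conclude: if $G_m$ and $G_n$ with $m < n$ were equivalent, then in particular $G_n \leq G_m$, which the previous step has ruled out. Therefore the $G_n$ represent infinitely many pairwise non-equivalent monotone games over $L_5$, as desired. I expect essentially no obstacle here, since all the substantive work has been carried out in the preceding lemmas; the one point requiring care is the bookkeeping in the contradiction step, where one must invoke $m+1 \leq n$ so that Lemma~\ref{lem:L5-tri} is applied to the consecutive pair $(m, m+1)$ rather than to a non-consecutive pair, with transitivity bridging the gap to $G_n$.
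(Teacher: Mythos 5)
Your proposal is correct and follows essentially the same route as the paper: the paper likewise combines Lemma~\ref{lem:L5-leq} and Lemma~\ref{lem:L5-tri} to get $G_n < G_{n+1}$ and concludes that the $G_n$ are pairwise non-equivalent monotone games. You merely spell out the transitivity bookkeeping (reducing $G_n \nleq G_m$ for $m<n$ to the consecutive case) that the paper leaves implicit.
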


\begin{proof}
  By Lemmas~\ref{lem:L5-leq} and {\ref{lem:L5-tri}}, we have $G_{n} <
  G_{n+1}$ for all $n$. In particular, the sequence $G_0,G_1,\ldots$
  consists of infinitely many non-equivalent games. Moreover, they are
  monotone by Lemma~\ref{lem:monotone}.
\end{proof}

Corollary~\ref{cor:infinite} completes the classification of atom
posets into whether there exist finitely or infinitely many game
values. Specifically, it provides the last remaining piece of the
following theorem:

\begin{theorem}
  Let $A$ be a poset. Then the class of monotone game values over $A$
  is finite if $A$ is a linear order of 4 or fewer elements. In all
  other cases, there are infinitely many monotone values.
\end{theorem}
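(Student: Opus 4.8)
The plan is to assemble the theorem from three ingredients, two already available and one requiring a short new argument. The finite claim---that there are only finitely many monotone values over $L_n$ when $n\leq 4$---is exactly the result established in \cite{Selinger2021}, so I would cite it directly. For the infinite claim I would partition the remaining posets into two families: those that are not linearly ordered, and the linear orders $L_n$ with $n\geq 5$. The first family is dispatched by \cite{Selinger2021}, which shows that any poset containing a pair of incomparable elements admits infinitely many non-equivalent monotone values. This leaves the linear orders $L_n$ with $n\geq 5$, and here Corollary~\ref{cor:infinite} already settles the case $n=5$.

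The actual work is to lift the case $n=5$ to all $n\geq 5$. The key observation is that $L_5$ sits inside $L_n$ as a sub-poset: identifying $\s{-3,-2,-1,0,1}$ with any five elements of $L_n$ under the induced order, every game $G_n$ constructed above---all of whose atoms lie in $\s{-3,-2,-1,0,1}$---is simultaneously a game over $L_n$. So I would prove a general transfer lemma: whenever $A$ is a sub-poset of $B$ and $G,H$ are games whose atoms all lie in $A$, the relation $G\leq H$ computed over $A$ holds if and only if it holds over $B$, and likewise for $G\tri H$.

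I would prove this transfer lemma by simultaneous induction on the structure of $G$ and $H$, following the mutual recursion that defines $\leq$ and $\tri$. The crucial point is that the poset enters that recursion in a single place, the atomic base clause ``$G=[a]$ and $H=[b]$ are atomic and $a\leq b$''; every other clause refers only to the options of $G$ and $H$ and is therefore indifferent to the ambient poset. Since the inclusion of $A$ into $B$ both preserves and reflects order, the comparison $a\leq b$ has the same truth value over $A$ as over $B$, so each clause delivers the same verdict in both posets and the induction closes. I expect the only real obstacle here to be bookkeeping: keeping the two mutually recursive relations aligned so that the induction hypothesis is applied at the correct structurally smaller pairs.

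With the transfer lemma in hand the theorem follows at once. Monotonicity is defined purely through $\leq$, so the lemma shows that each $G_n$ remains monotone when regarded over $L_n$; and since Lemmas~\ref{lem:L5-leq} and~\ref{lem:L5-tri} give $G_n<G_{n+1}$ over $L_5$, the transfer lemma promotes this to $G_n<G_{n+1}$ over $L_n$. Hence $L_n$ carries infinitely many non-equivalent monotone values for every $n\geq 5$. Together with the incomparable-elements case and the finite case for $n\leq 4$, this exhausts every poset $A$ and completes the proof.
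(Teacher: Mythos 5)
Your proof follows the same decomposition as the paper: cite \cite{Selinger2021} for the non-linearly-ordered case and for linear orders of at most $4$ elements, and reduce larger linear orders to Corollary~\ref{cor:infinite}; your transfer lemma is just an explicit, careful justification of what the paper compresses into one sentence (``one may simply disregard the additional atoms''), and it is correct, since the atom order enters the mutual recursion defining $\leq$ and $\tri$ only in the atomic base clause, which an order-embedding preserves and reflects. The one slip is in your case split: you describe the remaining linear orders as ``$L_n$ with $n\geq 5$,'' which literally omits \emph{infinite} linear orders (the paper explicitly includes them); since your transfer lemma applies verbatim to any linear order $B$ containing five chosen elements, this is a matter of restating the partition, not a mathematical gap.
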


\begin{proof}
  It was shown in {\cite[Prop.~10.2]{Selinger2021}} that there are
  infinitely many monotone game values over $A$ when $A$ has two
  incomparable elements. This takes care of all cases where $A$ is not
  linearly ordered. It was further shown in
  {\cite[Sec.~10.2]{Selinger2021}} that there are only finitely many
  monotone game values over $A$ when $A$ is a linearly ordered set of
  size $1$, $2$, $3$, or $4$. (In this case, there are $1$, $3$, $8$,
  or $31$ such values, respectively). The case of the zero-element
  poset is trivial, as there are no game values at all. The only
  remaining cases are linearly ordered sets of $5$ or more elements
  (including infinite ones). In these cases, there are infinitely many
  monotone game values by Corollary~\ref{cor:infinite}. Note that if
  the atom poset has strictly more than $5$ elements, one may simply
  disregard the additional atoms.
\end{proof}

We conclude this paper with a remark that may shed some light on the
properties of the games $G_n$.

\begin{remark}
  As mentioned in the proof of Lemma~\ref{lem:monotone}, the games in
  the sequence $G_0, G_1, G_2, \ldots$ all have the property that the
  final score is equal to the number of moves made by Left minus the
  number of moves made by Right. In combinatorial game terminology,
  these games have {\em constant temperature 1}, because each move
  shifts the average outcome by exactly 1 in the direction that favors
  the player who made the move. As we already saw, such games are
  automatically monotone. Moreover, such games are equivalent to
  normal-play games in the following sense: if Left goes second in $G$
  and the players alternate, the outcome will be $0$ if and only if
  Left gets the last move, and $-1$ otherwise. Let $\np(G)$ be the
  normal-play game obtained from $G$ by replacing every atom by
  $0=\g{\,\,|\,\,}$. Then $0\leq G$ if and only if Left has a
  second-player strategy guaranteeing outcome $0$, if and only if Left
  has a second-player strategy guaranteeing the last move, if and only
  if Left has a second-player winning strategy in $\np(G)$, if and
  only if $0\leq\np(G)$. Moreover, the same observation holds for
  comparison games as well, so that $G\leq H$ if and only if
  $\np(G)\leq\np(H)$. We can therefore see that the strictly
  increasing sequence of monotone games $G_0<G_1<\ldots$ corresponds
  to a strictly increasing sequence $\np(G_0)<\np(G_1)<\ldots$ of
  (rather specially constructed) normal-play games.
\end{remark}

\bibliographystyle{abbrv}
\bibliography{hex-l5}

\end{document}